\documentclass[11pt]{article}

\usepackage{amsmath,amsthm,amssymb,mathrsfs,amscd,amsthm,amscd,amsfonts,graphicx}
\usepackage[cmtip,all]{xy}

\usepackage{eucal}
\usepackage[T1]{fontenc}
\usepackage[utf8]{inputenc}
\usepackage{indentfirst,url,xypic}
\usepackage{lipsum}
\usepackage{faktor}
\usepackage[all]{xy}
\usepackage{url}
\usepackage[colorlinks,plainpages]{hyperref}
\usepackage{verbatim}
\usepackage{fancyvrb}
\usepackage{listings}
\usepackage{mathtools}
\usepackage{algorithm}
\usepackage[noend]{algorithmic}

\lstset{
	basicstyle=\ttfamily,
	mathescape
} 
\usepackage[colorlinks,plainpages]{hyperref}
\setlength{\parindent}{15pt}
\hypersetup{
	colorlinks=true,
	linkcolor=blue,
	filecolor=magenta,      
	urlcolor=cyan,
	citecolor=blue
}

\newtheorem{Definition}{ Definition}[section]
\newtheorem{Theorem}[Definition]{Theorem}
\newtheorem{Remark}[Definition]{Remark}
\newtheorem{Proposition}[Definition]{Proposition }
\newtheorem{Corollary}[Definition]{Corollary}

\newtheorem{Proof}[Definition]{Proof}
\newtheorem{Proof*}{Proof}
\newtheorem{Lemma}[Definition]{Lemma}


\newcommand{\Z}{{\mathbb Z}}

\newcommand{\Q}{{\mathbb Q}}

\newcommand{\F}{{\mathbb F}}



\newcommand{\fp}{\mathfrak{p}}
\newcommand{\fq}{\mathfrak{q}}

\DeclareMathAlphabet{\mathsc}{U}{rsfs}{m}{n}

\newcommand{\x}{{\langle x \rangle}}

\DeclareMathOperator{\Spec}{Spec}



\DeclareMathOperator{\Quot}{Quot}






\begin{document}

\title{Using Semicontinuity for Standard Bases Computations}
\author{Gert-Martin Greuel, Gerhard Pfister, Hans Schönemann}

\maketitle
\begin{center} Dedicated to the memory of Vladimir Gerdt
\end{center} 

\begin{abstract}
We present new results and an algorithm for standard basis computations of a 0-dimensional ideal $I$ in a  power series ring or in the localization of a polynomial ring in finitely many variables over a field K.
The algorithm provides a significant speed up if $K$ is the quotient field of a Noetherian integral domain $A$, when coefficient swell occurs.
The most important special cases are perhaps $A=\Z$ resp $A=k[t]$, $k$ any field and $t$ a set of parameters. Given $I$ as an ideal in the polynomial ring over $A$, we compute first a standard basis modulo a prime number $p$, resp. specializing $t=p\in k^s$.
We then use the 
"highest corner" of the specialized ideal to cut off high order terms from the polynomials during the standard basis computation over $K$ to get the speed up. An important fact is that we can choose  $p$ as an arbitrary prime resp. as an arbitrary element of $k^s$, not just a "lucky" resp. "random" one. Correctness of the algorithm will be deduced from a general semicontinuity theorem in \cite{GP20}. The computer algebra system {\sc Singular} provides already the functionality to realize the algorithm  and we present several examples illustrating its power.\\

\noindent 
{\bf Mathematics Subject Classification – MSC2020.} 1304, 13P10, 1404, 14B05, 14Q20.\\
{ \bf Keywords.} Standard bases, algorithm for zero-dimensional ideals, semicontinuity, highest corner.
\end{abstract}


\addcontentsline{toc}{section}{Introduction}


\noindent \textbf{\Large Introduction}\\

 When studying numerical invariants of singularities of an algebraic variety $X$ at a point 
 one is often lead at some point to compute the dimension  $\dim_{K}K[x]_\x/I$ for some field $K$ and $I$ an ideal in $K[x]$. Typical examples are the Milnor number or the Tjurina number of an isolated hypersurfac singularity. 
 These computations can be very time and space consuming  for $K=\Q$ or $K=k(t)$, the quotient field of $k[t]$ for some field $k$ and finitely many parameters $t=(t_1,...,t_s)$ due to intermediate coefficient growth.

  Algebraic geometers usually choose then a  prime number $p\in \Z$ resp. substitute $t$ by an element $p \in k^s$, with $p$ "sufficiently general", and do the standard computation of the induced ideal $I(p)$ in $\F_p[x]_\x$ resp. $k[x]_\x$.
The computed dimension $\dim_{\F_p} \F_p[x]_\x/I(p)$ resp. $\dim_{k}k[x]_\x/I(p)$ coincides with high probability with $\dim_K K[x]_\x/I$. But to be sure, a standard basis computation over the field $K$ can in general not be avoided.
 
In the present paper we provide an algorithm for the computation of $\dim_K K[x]_\x/I$, where $K= \Quot(A)$ is the quotient field of a Noetherian integral domain $A$, which is significantly faster for non-trivial examples than previously known methods. Let us explain the main ideas for the special cases $A=\Z$, $K=\Q$ and $A=k[t]$, $K=k(t)$.  To simplify notations we set
$$ R(0):=K[x]_\x \text{ and } R(p):=k(p)[x]_\x,$$
where,  for $A=\Z$, $p$ is a prime number and $k(p):=\F_p$, while for   $A=k[t]$,  $p$ is an element of $k^s$ and $k(p):=k$. Let $I(p)\subset R(p)$ denote the ideal generated by the image of $I$ under the natural map $A[x] \to R(p)$.

We start the algorithm by  choosing an arbitrary $p$ and compute a standard basis  of the ideal $I(p)$ in $R(p)$ over $k(p)$. We assume that 
$$\dim_{k(p)} R(p)/I(p) <\infty,$$ 
 the dimension being computed by using the standard basis of $I(p)$. Since the codimension of $I(p)$ in $R(p)$ is finite, $I(p)$ has a so called "highest corner" $HC(I(p))$, introduced in \cite{GP08}, which is the smallest monomial (w.r.t. to the local monomial order) not contained in the leading ideal $L(I(p))$.
 $HC(I(p))$ can be read from the leading terms of the standard basis of $I(p)$. 
 We then compute a standard basis of the ideal $I(0):=IR(0)$ in $R(0)$ over $K$, but during the computation we
cut off high order terms from the polynomials, the bound for cutting off being deduced from $HC(I(p))$ (Corollary \ref{cor.trunc}). To prove correctness of the algorithm, i.e., that the result is in fact a standard basis of $I(0)$, we will prove that the highest corner is lower semicontinuous in the Zariski topology of $\Spec A$ (Proposition \ref{prop.scHC}). This is  in turn deduced from a general semicontinuity theorem in \cite{GP20} (cf. Theorem \ref{thm.sc}).
An important fact is that we can choose  $p$ as an arbitrary prime resp. as an arbitrary element of $k^s$, not just a  "sufficiently general" one.   

We state the results and the algorithm for a general Noetherian domain $A$,
but illustrate this in Section \ref{sec.3} for the special cases $A=\Z$ and $k[t]$.
The impressive speed up for non-trivial examples is demonstrated at the end of the article.
The computations are done in the computer algebra system {\sc Singular}
(\cite{DGPS}), which provides the needed functionality: computations in $k[x]_\x$, commands for the highest corner and for truncation of polynomials.
\medskip

{\bf Notation:} The following notations are used throughout the paper.
$A$ denotes a Noetherian integral domain, $A[x]$, $x=(x_1,...,x_n)$, the 
polynomial ring over $A$ and $I\subset A[x]$ an ideal.
For a prime ideal $\fp$ in $A$ let $k(\fp):=A_\fp/\fp A_\fp = \Quot(A/\fp)$ be the residue field of the local ring $A_\fp$. In particular, $k(0):=k(\langle 0\rangle)=\Quot(A)$. The map $A \to k(\fp)$ induces natural maps 
$A[x] \to k(\fp)[x] \to k(\fp)[x]_\x \to (k(\fp)[x]_\x)^\wedge =  k(\fp)[[x]]$, with $^\wedge$ the $\x$-adic completion. We set 
$$ R(\fp):=k(\fp)[x]_\x \text{ and } \hat R(\fp):=k(\fp)[[x]],$$ 
and denote by  $I(\fp)$ resp. $\hat I(\fp)$ the ideal in  $R(\fp)$ resp. in $\hat R(\fp)$ generated by the image of $I$ under the above maps.
Instead of $R(\langle 0\rangle)$ and $I(\langle 0\rangle)$ we write $R(0)$ and $I(0)$. Note that 
$$\dim_{k(\fp)} R(\fp)/I(\fp)< \infty \iff \dim_{k(\fp)} \hat R(\fp)/\hat I(\fp) <\infty$$
and that then the dimensions coincide.


\section{Semicontinuity of the Highest Corner}\label{sec.1}

The following semicontinuity theorem is the main ingredient for the correctness of our algorithm.

\begin{Theorem} [\cite{GP20}, Theorem 42] \label{thm.sc} 
For a fixed $\fp \in \Spec A$ assume that $\dim_{k(\fp)} \hat R(\fp)/\hat I(\fp) <\infty$. Then there is an open neighbourhood $U$ of $\fp$  in $\Spec A$ such that  
$$\dim_{k(\fq)} \hat R(\fq)/\hat I(\fq) \leq  \dim_{k(\fp)} \hat R(\fp)/\hat I(\fp) \text { for all } \fq \in U.$$
\end{Theorem}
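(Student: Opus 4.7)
The plan is to prove the theorem by constructing, after localizing $A$ at a single element $h \notin \fp$, a standard basis of $I$ whose leading terms remain leading terms under every specialization in the open set $D(h) \subseteq \Spec A$. First I would use the hypothesis: since $\dim_{k(\fp)} \hat R(\fp)/\hat I(\fp) < \infty$, the ideal $\hat I(\fp)$ contains some power $\langle x\rangle^N$ of the maximal ideal, so for a fixed local monomial order $<$ on $k(\fp)[[x]]$ there is a finite standard basis $g_1, \dots, g_m$ of $\hat I(\fp)$ with leading monomials $x^{\alpha_1}, \dots, x^{\alpha_m}$ generating $L(\hat I(\fp))$. Truncating modulo $\langle x\rangle^{N+1}$ does not disturb the standard basis property, so I may assume each $g_i$ is a polynomial in $k(\fp)[x]$.

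The second step is to unpack the representation of the $g_i$ produced by Mora's tangent cone algorithm. Pick a finite generating set $f_1, \dots, f_r$ of $I$ in $A[x]$ (available since $A[x]$ is Noetherian). Mora's algorithm expresses each $g_i$ as $\sum_j a_{ij} f_j$ with $a_{ij} \in k(\fp)[x]_{\langle x\rangle}$, and only finitely many denominators appear, each a polynomial with constant term a unit in $k(\fp)$. Because $A_\fp = \varinjlim_{h \notin \fp} A_h$, there is a single $h \in A \setminus \fp$ such that all numerators and denominators lift to $A_h$, with the relevant constant terms units in $A_h$. This produces lifts $G_i \in I \cdot (A_h)[x]_{\langle x\rangle}$ whose leading monomial with respect to $<$ is $x^{\alpha_i}$ with leading coefficient a unit of $A_h$.

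For any $\fq \in U := D(h)$, reducing each $G_i$ modulo $\fq$ gives an element of $I(\fq) \subseteq R(\fq)$ whose leading coefficient is still a unit, because the specialization of a unit in $A_h$ at a prime avoiding $h$ remains a unit in $k(\fq)$. Hence $L(\hat I(\fq)) \supseteq \langle x^{\alpha_1}, \dots, x^{\alpha_m}\rangle$. Using that the monomials outside the leading ideal form a $k(\fq)$-vector space basis of the quotient, I conclude
\[
\dim_{k(\fq)} \hat R(\fq)/\hat I(\fq) = \dim_{k(\fq)} \hat R(\fq)/L(\hat I(\fq)) \leq \dim_{k(\fp)} \hat R(\fp)/L(\hat I(\fp)) = \dim_{k(\fp)} \hat R(\fp)/\hat I(\fp),
\]
the middle inequality holding because the cardinality of the monomial complement of $\langle x^{\alpha_1},\dots,x^{\alpha_m}\rangle$ is independent of the base field.

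The main obstacle is the second step: packaging the Mora computation so that all denominators become units after inverting one element of $A$. The finite-codimension hypothesis is what makes this feasible: Mora's algorithm terminates after finitely many steps modulo $\langle x\rangle^{N+1}$, only finitely many denominators appear, and one can truncate to polynomials, so ``denominator is a unit'' becomes the polynomial condition of having a nonzero constant term, which can be realized simultaneously by inverting one element of $A$ avoiding $\fp$.
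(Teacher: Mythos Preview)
The paper does not itself prove this theorem; it is quoted from \cite{GP20} and used as a black box. So there is no argument in the present paper to compare against, and your proposal must stand on its own.

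There is a genuine gap in the lifting step. You assert that the lifts $G_i=\sum_j A_{ij}f_j\in I\cdot(A_h)[x]_{\langle x\rangle}$ have leading monomial $x^{\alpha_i}$ with unit leading coefficient in $A_h$. But lifting the coefficients $a_{ij}\in k(\fp)[x]_{\langle x\rangle}$ to $A_{ij}$ does not control the terms of $G_i$ that lie \emph{above} $x^{\alpha_i}$: such terms may appear with nonzero coefficients lying in $\fp A_h$, since all you know is that $G_i\bmod \fp=g_i$. When you then reduce modulo a different prime $\fq\in D(h)$, those coefficients need not vanish, so the leading monomial of $G_i\bmod\fq$ can be strictly larger than $x^{\alpha_i}$, and you cannot conclude $x^{\alpha_i}\in L(\hat I(\fq))$. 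Concretely, take $A=\Z$, $\fp=\langle p\rangle$, $I=\langle x^2+py,\,y^2\rangle\subset\Z[x,y]$ with the ordering \texttt{ds}. Over $\F_p$ the standard basis is $\{x^2,y^2\}$, so $L(\hat I(\fp))=\langle x^2,y^2\rangle$. Any lift of $g_1=x^2$ inside $I$ is $G_1=f_1=x^2+py$ (up to a unit), whose leading monomial is $y$, not $x^2$. Over $\Q$ one computes $L(\hat I(0))=\langle y,x^4\rangle$, which does \emph{not} contain $x^2$; the inclusion $L(\hat I(\fq))\supseteq\langle x^{\alpha_1},\dots,x^{\alpha_m}\rangle$ you rely on is simply false. (The theorem itself survives here because both quotients have dimension~$4$, but not via your mechanism.)

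A repair does not go through standard bases directly. One route that works: set $d=\dim_{k(\fp)}\hat R(\fp)/\hat I(\fp)$ and choose $N$ with $\langle x\rangle^N\subset\hat I(\fp)$. For each $M\ge N$ the finitely generated $A$--module $A[x]/(I+\langle x\rangle^M)$ has upper--semicontinuous fibre dimension, so there are open sets $U_M\ni\fp$ on which $d_M(\fq):=\dim_{k(\fq)}k(\fq)[x]/(I(\fq)+\langle x\rangle^M)\le d$. On the finite intersection $U:=U_N\cap\cdots\cap U_{N+d+1}$ the nondecreasing sequence $d_N(\fq),\dots,d_{N+d+1}(\fq)$ takes $d+2$ values in $\{0,\dots,d\}$, so two consecutive values coincide; by Nakayama in $k(\fq)[[x]]$ this forces $\langle x\rangle^M\subset\hat I(\fq)$ at that $M$, hence $\dim_{k(\fq)}\hat R(\fq)/\hat I(\fq)=d_M(\fq)\le d$. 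This avoids the uncontrolled ``extra'' terms that break your lifting argument.
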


Since the prime ideal $\langle 0\rangle$ is contained in every neighbourhood of any prime ideal $\fp$, we get

\begin{Corollary}\label{cor.sc}
Let $\fp \in \Spec A$ be such that $\dim_{k(\fp)} R(\fp)/ I(\fp) <\infty$. Then 
$$\dim_{k(0)} R(0)/ I(0) \leq  \dim_{k(\fp)} R(\fp)/ I(\fp).$$
\end{Corollary}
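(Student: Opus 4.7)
The plan is to obtain the corollary by a direct application of Theorem \ref{thm.sc}, bridging the completed and non-completed settings via the equivalence recorded at the end of the \textbf{Notation} paragraph, and then exploiting the fact that in an integral domain the zero ideal is the generic point of $\Spec A$.

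First, I would translate the hypothesis into the completed setting: the note in the introduction says $\dim_{k(\fp)} R(\fp)/I(\fp) < \infty$ if and only if $\dim_{k(\fp)} \hat R(\fp)/\hat I(\fp) < \infty$, and in that case the two dimensions agree. Hence the assumption of Theorem \ref{thm.sc} is met, and I obtain an open neighbourhood $U \subset \Spec A$ of $\fp$ such that
\[
\dim_{k(\fq)} \hat R(\fq)/\hat I(\fq) \leq \dim_{k(\fp)} \hat R(\fp)/\hat I(\fp) \quad \text{for all } \fq \in U.
\]

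Next, I would show that $\langle 0\rangle \in U$. Write $U = \Spec A \setminus V(J)$ for some ideal $J \subset A$. If $\langle 0\rangle \in V(J)$, then $J \subseteq \langle 0\rangle = (0)$, forcing $J = 0$, and hence $V(J) = \Spec A$, contradicting $\fp \in U$. This is precisely the observation, made in the statement preceding the corollary, that the generic point of the irreducible space $\Spec A$ lies in every nonempty open subset.

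Finally, applying the displayed inequality at $\fq = \langle 0\rangle$ and invoking the equivalence of completed versus non-completed dimensions on both sides yields
\[
\dim_{k(0)} R(0)/I(0) \;=\; \dim_{k(0)} \hat R(0)/\hat I(0) \;\leq\; \dim_{k(\fp)} \hat R(\fp)/\hat I(\fp) \;=\; \dim_{k(\fp)} R(\fp)/I(\fp),
\]
which is the desired inequality. There is no real obstacle here beyond correctly citing Theorem \ref{thm.sc} and the topological triviality about the generic point; the transfer between $R$ and $\hat R$ is already supplied in the notation.
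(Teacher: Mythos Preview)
Your proof is correct and follows exactly the paper's approach: the paper simply observes (in the sentence immediately preceding the corollary) that $\langle 0\rangle$ lies in every open neighbourhood of any $\fp$, so Theorem~\ref{thm.sc} applied at $\fq=\langle 0\rangle$ gives the result. You have merely filled in the routine details---the transfer between $R$ and $\hat R$ via the remark in the Notation paragraph, and the explicit verification that the generic point lies in $U$---that the paper leaves implicit.
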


 We recall now the definition of the highest corner from \cite[Definition 1.7.11]{GP08}. It uses the notion of a local monomial ordering $>$ on a polynomial ring
 $A[x_1,...,x_n]$, $A$ any ring, i.e. a local monomial ordering on
 $Mon(x_1, ... , x_n)$, the set of monomials (power products) in $x_1, ... , x_n$ ($1> x_i$ for each $i$) as defined in \cite[Section 1.2]{GP08}.
 
\begin{Definition} Let $K$ be a field, $I \subset K[x]$ an ideal, and $>$ a local monomial ordering on $K[x]$. A monomial $m$ is called the {\em highest corner} of $I$ (with respect to $>$), denoted by $HC(I)$, if
\begin{enumerate}
\item $m \notin L(I)$, where $L(I)\in K[x]$ is the leading ideal\,\footnote{The leading ideal of $I$ is the ideal generated by the leading monomials of all elements $\neq 0$ of $I$.} of $I$,
\item if $m'$ is a monomial with $m' <m$ $\Longrightarrow m' \in L(I)$.
\end{enumerate}
\end{Definition}

 It shown in \cite[Lemma 1.7.13]{GP08} that $HC(I)$ is the smallest monomial (with respect to $>$) not contained in $I$. 
\medskip

In the following we need local weighted degree orderings, where the variables have negative weights (see \cite[Definition 1.2.9]{GP08}). A local weighted degree ordering is called a {\em local degree ordering}
if weight$(x_i) = -1$ for each $i$.\footnote{Examples of local degree orderings in {\sc Singular} are {\tt ds} and  {\tt Ds}  and of local weighted degree orderings {\tt ws} and  {\tt Ws}.}

\begin{Lemma}\label{lem.hc}
Let  $\fp$ be a prime ideal in $A$ such that $\dim_{k(\fp)} R(\fp)/ I(\fp) < \infty$ and let $>$ be a local weighted degree ordering 
on $Mon(x_1, ... , x_n)$. Then $HC(I(\fp))$ and $HC(I(0))$ exist.
\end{Lemma}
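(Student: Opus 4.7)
The plan is to combine the standard-basis theory of local orderings with Corollary \ref{cor.sc}. The key point is that when $R(\fp)/I(\fp)$ is finite-dimensional, only finitely many monomials lie outside the leading ideal $L(I(\fp))$, so the $>$-smallest such monomial exists and automatically satisfies both defining properties of the highest corner.

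First I would treat $HC(I(\fp))$. Applying the standard-basis theory of \cite{GP08} to the local ordering $>$ on $R(\fp) = k(\fp)[x]_\x$, the set $B := \{m \in \mathrm{Mon}(x_1,\ldots,x_n) : m \notin L(I(\fp))\}$ is a $k(\fp)$-basis of $R(\fp)/I(\fp)$. The hypothesis $\dim_{k(\fp)} R(\fp)/I(\fp) < \infty$ therefore forces $B$ to be finite. Since $>$ is a total order on $\mathrm{Mon}(x_1,\ldots,x_n)$, the finite set $B$ has a $>$-smallest element $m^*$. By construction $m^* \notin L(I(\fp))$, and any monomial $m' < m^*$ lies outside $B$ (by minimality of $m^*$) and hence in $L(I(\fp))$. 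Both clauses of the definition are thus fulfilled, so $m^* = HC(I(\fp))$ exists.

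For $HC(I(0))$, Corollary \ref{cor.sc} gives $\dim_{k(0)} R(0)/I(0) \leq \dim_{k(\fp)} R(\fp)/I(\fp) < \infty$, and the previous argument applied with $\langle 0 \rangle$ in place of $\fp$ produces $HC(I(0))$. The only delicate point — and thus the main obstacle, such as it is — is invoking the monomial basis theorem in the correct local setting: one needs that for a local ordering on $K[x]_\x$ the monomials outside $L(J)$ really do give a $K$-basis of the quotient, and in particular that finite vector-space dimension translates into finite cardinality of $B$. The local weighted degree hypothesis on $>$ is not strictly used in this pure existence statement, but it is the standard framework under which the basis theorem of \cite{GP08} is stated and under which the rest of the paper (in particular the truncation bound derived from $HC(I(\fp))$ in the algorithm) operates.
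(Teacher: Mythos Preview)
Your proposal is correct and follows essentially the same route as the paper: the paper simply cites \cite[Lemma 1.7.14]{GP08} for existence of the highest corner in the zero-dimensional case and Corollary~\ref{cor.sc} to transfer finite-dimensionality to $I(0)$, while you unpack the content of that lemma explicitly via the monomial basis of $R(\fp)/I(\fp)$ and then invoke Corollary~\ref{cor.sc} in the same way. Your remark that the weighted-degree hypothesis is not strictly needed for mere existence is accurate, though the paper keeps it to stay within the setting of \cite[Lemma 1.7.14]{GP08} and the subsequent truncation arguments.
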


 \begin{proof}
This follows from \cite[Lemma 1.7.14]{GP08} and Corollary \ref{cor.sc}.
\end{proof}

We prove now the semicontinuity of the highest corner.

\begin{Proposition}\label{prop.scHC}
With the assumptions of Lemma \ref{lem.hc}
let $M$ be a set of monomials $m$ such that $m< HC(I(\fp))$
and set  
$I'(0):= I(0)+\langle M\rangle_{R(0)}.$
\begin{enumerate}
\item [(i)] If $\dim_{k(\fp)} R(\fp)/ I(\fp) = \dim_{k(0)} R(0)/ I'(0)$
then $I'(0) = I(0)$.
\item [(ii)]  If $M$ is the set of all monomials $< HC(I(\fp))$ and $\dim_{k(\fp)} R(\fp)/ I(\fp) = \dim_{k(0)} R(0)/ I'(0)$, then
$$HC(I(0))\geq HC(I(\fp)).$$
\end{enumerate}
\end{Proposition}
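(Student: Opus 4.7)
The plan is to reduce both parts to Corollary \ref{cor.sc} together with the characterization of the highest corner from \cite[Lemma 1.7.13]{GP08} (already quoted in the excerpt) as the smallest monomial not contained in the ideal.

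For part (i), I would set up a three-term chain of dimension comparisons. Since $I(0) \subseteq I'(0)$ by construction, the canonical surjection $R(0)/I(0) \twoheadrightarrow R(0)/I'(0)$ gives
$$\dim_{k(0)} R(0)/I'(0) \leq \dim_{k(0)} R(0)/I(0).$$
Corollary \ref{cor.sc} applied to $\fp$ then yields
$$\dim_{k(0)} R(0)/I(0) \leq \dim_{k(\fp)} R(\fp)/I(\fp) < \infty.$$
Combining these with the hypothesis $\dim_{k(\fp)} R(\fp)/I(\fp) = \dim_{k(0)} R(0)/I'(0)$ pinches the chain, forcing $\dim_{k(0)} R(0)/I(0) = \dim_{k(0)} R(0)/I'(0)$ to be a common finite value. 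Hence the surjection $R(0)/I(0) \twoheadrightarrow R(0)/I'(0)$ is a surjection of finite-dimensional $k(0)$-vector spaces of equal dimension, so it is an isomorphism, and therefore $I(0) = I'(0)$.

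For part (ii), I would apply (i) directly with the prescribed choice of $M$. Since every element of $M$ lies in $I'(0)$ by definition and $I'(0) = I(0)$ by (i), every monomial $m$ with $m < HC(I(\fp))$ already lies in $I(0)$. Because $HC(I(0))$ is, by \cite[Lemma 1.7.13]{GP08}, the smallest monomial with respect to $>$ that is not contained in $I(0)$, and all monomials strictly below $HC(I(\fp))$ are ruled out as candidates, one must have $HC(I(0)) \geq HC(I(\fp))$ in the local ordering.

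The main obstacle I anticipate is essentially bookkeeping: verifying that the hypothesis squeezes the three inequalities simultaneously into equalities, and recognizing that ``surjection of finite-dimensional spaces of the same dimension is an isomorphism'' does upgrade the $a$ priori containment $I(0) \subseteq I'(0)$ to the actual equality of ideals, rather than just equality of codimensions. No algebraic input beyond Corollary \ref{cor.sc} and the definition of $HC$ appears to be needed; the semicontinuity of the highest corner is a formal consequence of the semicontinuity of the codimension.
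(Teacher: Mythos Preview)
Your proof is correct and follows essentially the same route as the paper: the same three-term chain of dimension inequalities (via $I(0)\subseteq I'(0)$ and Corollary \ref{cor.sc}) is squeezed into an equality to obtain (i), and (ii) is then read off from the characterization of the highest corner. The only cosmetic difference is that the paper phrases (ii) via $m\in L(I(0))$ whereas you invoke the equivalent description of $HC(I(0))$ as the smallest monomial not in $I(0)$.
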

 \begin{proof}
(i) From Corollary \ref{cor.sc} we get
$$\dim_{k(0)} R(0)/I'(0) \leq \dim_{k(0)} R(0)/I(0) \leq\dim_{k(\fp)} R(\fp)/I(\fp).$$ 
The assumption implies $\dim_{k(0)} R(0)/I'(0) = \dim_{k(0)} R(0)/I(0)$ and hence $I'(0) = I(0)$. 

(ii) Now let $M$ be the set of all monomials $< HC(I(\fp))$. Since $I'(0) = I(0)$, $m \in I(0)$ and hence $m \in L(I(0))$ for all $m < HC(I(\fp))$.
It follows that $HC(I(0)) \geq HC(I(\fp))$.
\end{proof}

\begin{Remark}{\em
Theorem \ref{thm.sc} is proved in \cite{GP20} more generally for submodules
 $I\subset A[x]^m$ with $\hat R(\fq)/\hat I(\fq)$ replaced by $\hat R(\fq)^m/\hat I(\fq)$, such that Corollary \ref{cor.sc} holds also for modules.
The highest corner can be defined in the same way for submodules of $K[x]^m$ and then Lemma \ref{lem.hc}, Proposition \ref{prop.scHC} and Algorithm \ref{alg} hold analogous for modules
(but we do not pursue this here).
}
\end{Remark}

\section{A bound for Truncation}
\begin{Proposition} \label{prop.trunc}
Let $K$ be a field,  $>$ a local degree ordering on $K[x]$, 
$I \subset K[x]$ an ideal with $\dim_K K[x]_\x/I <\infty$,  and  $m\leq HC(I)$ a monomial. If $g_1,...,g_s$ a minimal standard basis\,\footnote{A standard basis of $I$ w.r.t. > is a set of elements $G=\{g_1,...,g_s\}$ $\subset$ $I$ s.t. $L(I)= \langle LM(g_1), ...,LM(g_s)\rangle_{K[x]}$. By \cite[Lemma 1.6.7]{GP08}  $G$ generates $IK[x]_\x$. $G$ is called {\em minimal} if no $g_i$ can be deleted, i.e., the leading term of $g_i$  is not divisible by $LM(g_j), j\neq i$. }
of $I$, then
$$ \deg(LM(g_i)) \leq \deg(m)+1  \text{ for all } i.\footnote{deg denotes the ususal degree, with all variables having degree 1.}$$ 
\end{Proposition}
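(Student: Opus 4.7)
The plan is to argue by contradiction, using the fact that a local degree ordering makes total degree the primary comparison criterion: higher total degree means strictly smaller monomial. This lets us convert the hypothesis $m \leq HC(I)$ into the degree inequality $\deg(m) \geq \deg(HC(I))$, and conversely, any monomial whose degree strictly exceeds $\deg(HC(I))$ is strictly smaller than $HC(I)$, hence by the defining property of the highest corner lies in $L(I)$.

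With this translation in hand, suppose some $g_i$ violates the bound, so $d := \deg(LM(g_i)) \geq \deg(m) + 2 \geq 2$. Since $LM(g_i)$ has total degree $\geq 2$, at least one variable $x_k$ divides it; write $LM(g_i) = x_k \cdot n$ with $n$ a monomial of degree $d-1$. The chain of inequalities
\[
\deg(n) = d - 1 \geq \deg(m) + 1 > \deg(m) \geq \deg(HC(I))
\]
combined with the degree-smaller-than-HC observation shows $n < HC(I)$, so $n \in L(I)$. Hence some $g_j$ in the standard basis has $LM(g_j)$ dividing $n$, which in turn divides $LM(g_i)$.

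This is where the contradiction with minimality appears, and the only subtle point is to rule out $j = i$: if $j = i$, then $LM(g_i) \mid n$ while $n \mid LM(g_i)$, forcing $n = LM(g_i)$, which is impossible since $\deg(n) = d - 1 < d = \deg(LM(g_i))$. Thus $j \neq i$, and $LM(g_j) \mid LM(g_i)$ contradicts the minimality of $\{g_1,\dots,g_s\}$.

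I expect the main (modest) obstacle to be making the transition between the ordering $\leq$ and total degree airtight, especially justifying the one-step strict inequality $\deg(n) > \deg(HC(I))\Rightarrow n < HC(I)$ directly from the definition of a local degree ordering (weight $-1$ on each variable makes the weighted degree function strictly monotone decreasing in $\deg$). Everything else is bookkeeping with the definition of $HC$ and of minimality.
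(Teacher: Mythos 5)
Your proof is correct and takes essentially the same route as the paper's: translate $m \le HC(I)$ into $\deg(m) \ge \deg(HC(I))$, divide $LM(g_i)$ by a variable to get a monomial of degree $\ge \deg(m)+1$, conclude it lies in $L(I)$, and contradict minimality. The only difference is that you explicitly rule out $j=i$ via the degree drop, a detail the paper leaves implicit; otherwise the arguments coincide.
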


\begin{proof} By definition of $HC(I)$, all monomials $ < HC(I)$ are contained in $L(I)$. Since we have a local degree ordering, all monomials of degree $\geq \deg(m)+1$ are contained in $L(I)$. If $\deg(LM(g_i)) > \deg(m)+1$ for some $i$, we can divide 
$LM(g_i)$ by some variable and get a monomial of degree $\geq \deg(m)+1$. This is contained in $L(I)$ and hence divisible by some $LM(g_j)$, contradicting the minimality of the standard basis.
\end{proof}

\begin{Corollary} \label{cor.trunc}
With the assumptions of Proposition \ref{prop.trunc} 
let $f_1,...,f_k$ be a set of generators of $I$. Set  $d:= \deg(m)+1$ and for a fixed monomial $m'$ with $\deg(m')  > d$
set $\hat f_i:= f_i + a_i m'$ with $a_i \in K$ arbitrary. Then $\hat f_1,...,\hat f_k$ generate $I K[x]_\x$. 

Moreover, if we omit any monomial of degree $>d$  from the involved polynomials during the standard basis computation of $I$,  the result is a standard basis of $I$.
\end{Corollary}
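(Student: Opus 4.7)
The plan is to handle the two assertions of the corollary in turn. The first assertion---that $\hat f_1,\dots,\hat f_k$ generate $IK[x]_\x$---is the algebraic core, and the second---that one may truncate high-degree monomials at any stage of the standard basis algorithm---follows by iterating the first on every intermediate polynomial. Throughout I would use the following preliminary observation: since $>$ is a local degree ordering and $m \leq HC(I)$ forces $\deg(HC(I)) \leq \deg(m)$, every monomial of degree strictly larger than $d = \deg(m)+1$ is strictly below $HC(I)$ in $>$, and hence lies in $IK[x]_\x$ by the characterization of $HC(I)$ as the smallest monomial not in the ideal.

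For the first assertion, set $J := \langle\hat f_1,\dots,\hat f_k\rangle_{K[x]_\x}$. The inclusion $J \subseteq IK[x]_\x$ is immediate, since $\hat f_i = f_i + a_im'$ with $f_i \in I$ and $m' \in IK[x]_\x$ by the preliminary. For the reverse inclusion I would analyze the cyclic $K[x]_\x$-module $IK[x]_\x/J$: modulo $J$ one has $f_i \equiv -a_im'$, so the module is generated by the image $\overline{m'}$ of $m'$. Picking a variable $x_j$ dividing $m'$ (which exists since $\deg(m') \geq 2$) and setting $m'' := m'/x_j$, the preliminary gives $\deg(m'') \geq d > \deg(HC(I))$ and hence $m'' \in IK[x]_\x$. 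Writing $\overline{m''} = u\,\overline{m'}$ for some $u \in K[x]_\x$ then yields $\overline{m'} = x_j\,\overline{m''} = x_j u\,\overline{m'}$, so $(1 - x_j u)\,\overline{m'} = 0$; since $1 - x_j u \in 1 + \langle x \rangle$ is a unit in the local ring $K[x]_\x$, this forces $\overline{m'} = 0$, i.e.\ $m' \in J$, and then $f_i = \hat f_i - a_im' \in J$.

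For the second assertion, truncating a polynomial $f$ by dropping its monomials of degree $>d$ amounts to adding to $f$ a $K$-linear combination of such monomials, each covered by an application of the first assertion. Applied iteratively to every intermediate polynomial produced by the standard basis algorithm, this shows that every truncation leaves invariant the ideal in $K[x]_\x$ generated by the current working set, so the final output $G$ still generates $IK[x]_\x$. By Proposition~\ref{prop.trunc} the leading monomials of a minimal standard basis of $I$ all have degree $\leq d$, so $L(I)$ is already determined by its portion in degrees $\leq d$; truncation only discards monomials of degree $>d$ and therefore does not affect any leading term that can appear in a minimal standard basis. Consequently the leading ideal of $G$ still equals $L(I)$, so $G$ is a standard basis.

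The main obstacle is the Nakayama-type step in the first assertion: one cannot simply invoke $m' \in IK[x]_\x$ to place $m'$ into $J$, because the $a_i$ are allowed to be arbitrary (in particular all zero), so $\overline{m'}$ need not vanish for trivial reasons. What saves the argument is the extra observation that $m'/x_j$, not only $m'$, already belongs to $IK[x]_\x$, forced by the strict inequality $\deg(m'/x_j) \geq d > \deg(HC(I))$.
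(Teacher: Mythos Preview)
Your proof of the first assertion is correct and takes a genuinely different route from the paper. The paper argues in the reverse order: it proves the second assertion first (by adjoining the set $M$ of all degree-$(d{+}1)$ monomials to the generating set, observing that reduction by $M$ is exactly truncation, and invoking correctness of the reduced standard basis algorithm on $\{f_1,\dots,f_k\}\cup M$), and then deduces the first assertion because both $\{f_i\}$ and $\{\hat f_i\}$ yield the same reduced standard basis. Your Nakayama-style argument for the first assertion---showing $IK[x]_\x/J$ is cyclic on $\overline{m'}$ and killing it via the unit $1-x_ju$---is more direct and self-contained; it does not need any algorithmic machinery, only the fact that $m'/x_j$ already lies in $IK[x]_\x$.

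However, your argument for the second assertion has a genuine gap. You correctly establish, via iterated application of the first assertion, that the output $G$ of the truncated algorithm still generates $IK[x]_\x$. But the sentence ``truncation only discards monomials of degree $>d$ and therefore does not affect any leading term that can appear in a minimal standard basis; consequently the leading ideal of $G$ still equals $L(I)$'' is not a proof that $G$ is a standard basis. Knowing that $G$ generates $IK[x]_\x$ and that $L(I)$ is generated in degrees $\leq d$ does not by itself force $\langle LM(g):g\in G\rangle = L(I)$; you would need to know that the truncated algorithm, upon termination, actually satisfies Buchberger's criterion for $I$, and truncated s-polynomials reducing to zero with truncation is not the same as genuine s-polynomials having standard representations with respect to $G$. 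The paper closes this gap precisely by adjoining $M$: then truncation becomes an honest reduction step, the truncated run on $\{f_i\}$ is literally the ordinary (reduced) standard basis run on $\{f_i\}\cup M$, and correctness of the output follows from correctness of the standard algorithm applied to a generating set of $I$. That device---or an explicit parallel-run comparison showing the truncated and non-truncated computations agree in all terms of degree $\leq d$---is what your argument is missing.
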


\begin{proof} We prove the second statement first. Let $M$ be the set of all monomials $m'$ of degree $=d+1$. Then $m'$  will not occur as a leading monomial of a minimal standard basis of $I$ by Proposition \ref{prop.trunc}.  Moreover, $m'\in I$ since $m' < HC(I)$,  and $I$ is generated by $G_0=\{f_1,...,f_k\}\cup M$.

 Now consider the algorithm for computing a reduced standard basis\,\footnote{A {\em reduced standard basis} of $I$ is a minimal standard basis $G=\{g_1,...,g_s\}$  such that for each $i$ the leading coefficient of $g_i$ is 1 and all monomials of $g_i$ different from its leading monomial are not in the leading ideal $L(I)$. For zero-dimensional ideals a reduced standard basis always exists since all monomials smaller than the highest corner are in the ideal.}
 from \cite[Algorithms 1.7.1 and 1.7.6] {GP08}. Starting with $G_0$ we build s-polynomials and (completely) reduce them by previously computed polynomials.  In the i-th step of the algorithm we get a set of generators $G_i$ of $I$ which finally will become a reduced standard basis of $I$. We have for $f,g \in G_i$
$$spoly(f,g) = \sum c_\alpha x^\alpha= \sum_{|\alpha| \leq d}c_\alpha x^\alpha + \sum_{|\alpha| > d}c_\alpha x^\alpha,$$
where the second summand will be reduced to $0$ when building the reduced normal form, due to the monomials in $M$. We see that the effect of the monomials in $M$ during reduction  is the same as omitting all monomials of degree $> d$. The algorithm stops with a reduced standard basis of $I$ and  thus the second part of the corollary follows.

We just proved that $f_1,...,f_k$ and $\hat f_1,...,\hat f_k$ lead to the same reduced standard basis. Since any standard basis generates the ideal $I K[x]_\x$ in $K[x]_\x$ by \cite[Lemma 1.6.7]{GP08}, the first  statement follows.
\end{proof}

\begin{Remark}{\em 
(1) For special degree orderings we can delete even more terms during the standard basis computation. 
Let  $>$  denote the 
negative degree reverse lexicographical ordering {\tt ds} (\cite[Example 1.2.8]{GP08}) and  $x_n<...<x_1 <1$. Then all monomials $m'<x_n m$ can be deleted.\\
(2) For a local weighted degree ordering let $d$ be the smallest integer such that $\x^d \subset I$. Then each monomial $m'$ in $\x^d$ satisfies $m' < HC(I)$ and the same arguments as above show that Proposition \ref{prop.trunc} and Corollary \ref{cor.trunc} hold with 
$\deg(m)+1$ replaced by $d$.
}
\end{Remark}

\section{Special Cases and Algorithm} \label{sec.3}

We use the notations from the introduction, with $I\subset A[x]$ an ideal. 

\subsection{Special cases}
For the sake of convenience, we illustrate our results for the perhaps most important special cases $A=\Z$ and $A=k[t]$.
\medskip

$\bullet$ {\em $A=\Z$, computations over $\Q$ and $\F_p$}:\\
Let $p \in \Z$ be an {\em arbitrary} prime number.
For the prime ideals $\fp = \langle 0 \rangle$ resp.  $\fp = \langle p \rangle$, we have $k(\fp) = \Q$ resp. $k(\fp) = \F_p$. Then $R(0) = \Q[x]_\x$ resp. $R(p) = \F_p[x]_\x$. Let  $I(0)$  resp. $I(p)$ denote the induced ideal in  $R(0) $ resp. $R(p)$. If $I(p)$ is a 0-dimensional ideal we have by Corollary \ref{cor.sc}
$$\dim_{\Q}  \Q[x]_\x/ I(0) \leq  \dim_{\F_p} \F_p[x]_\x/ I(p).$$

$\bullet$ {\em $A=k[t]$, $t=(t_1,...,t_s)$, $k$ any field, computations over $k(t)$ and $k$}:\\
Let $p \in k^s$ be an {\em arbitrary} element.
For the prime ideals $\fp = \langle 0 \rangle$ resp.  $\fp = \langle t-p \rangle$, we have $k(\fp) = k(t)$ resp. $k(\fp) = k$.\footnote{\,We could choose any other maximal ideal $\fp$ of $k[t]$. Then $k(\fp)$ would be a finite field extension of $k$ and the algorithm works as well.} Let  $I(0) \subset R(0) = k(t)[x]_\x$ resp. $I(p) \subset R(p) = k[x]_\x$ be the induced ideals with $I(p)$ being 0-dimensional.
By Corollary \ref{cor.sc} we have
$$\dim_{k(t)}  k(t)[x]_\x/ I(0) \leq  \dim_{k} k[x]_\x/ I(p).$$
The $K$-dimensions in the above formulas do not change if we replace $K[x]_\x$ by $K[[x]]$ for the various fields $K$.

For $A=\Z[t]$ and $I\subset \Z[t][x]$ we can of course reduce modulo a prime number $p$ and substitute $t$ by $a\in \Z$ at the same time and get the ideal  $I(p,a)\subset \F_p[x]_\x$. We then have 
$\dim_{\Q(t)}  \Q(t)[x]_\x/ I(0) \leq  \dim_{\Q} \Q[x]_\x/ I(a)$ $\leq \dim_{\F_p} \F_p[x]_\x/ I(p,a)$ (Corollary \ref{cor.sc})
 and 
a standard basis computation of $I(p,a)$ is usually much faster than of $I(a) \subset \Q[x]_\x$, and $HC(I(p,a))$ can be used for deleting monomials.

\subsection {Algorithm}\label{alg}

Let $A$ be Noetherian domain of dimension $\geq 1$\footnote{If $\dim A = 0$, then $A$ is a field and the algorithm is trivial.}, such that $K=\Quot(A)$,  and $k(\fp) = \Quot(A/\fp)$, $\fp \subset A$ a prime ideal, are computable fields.
Let the ideal $I\subset A[x]$ be given by a finite set $S$ of polynomials in $A[x]$.  Choose a fixed  local degree ordering $>$ on $Mon(x)$. 
For the standard basis computation in the following algorithm we use  \cite[Algorithm 1.7.1 and 1.7.6] {GP08}.
\medskip

\begin{tabular}{lll}
INPUT: & $S\subset A[x]$ a finite set of polynomials.\\ 
& Assume that  $d(0):= \dim_K K[x]_\x/I(0)< \infty$,\\  
& with $I(0)$ the ideal generated by $S$ in $K[x]_\x$. \\

OUTPUT: & $G\subset A[x]$ a standard basis w.r.t. $>$ for the ideal $I(0)$.\\
\end{tabular}
\begin{enumerate}
\item Choose a prime ideal $\fp\neq \langle 0 \rangle$ in $A$ and  compute a standard  basis $G(\fp)$ w.r.t. $>$ of the ideal $I(\fp)$ generated by $S$ in $k(\fp) [x]_\x$.
\item Use $G(\fp)$ to compute  $d(\fp):=\dim_{k(\fp)}k(\fp) [x]_\x/I(\fp)$. 
\item If $d(\fp) = \infty$ choose another $\fp\neq \langle 0 \rangle$ and continue from the beginning.
\item Assume  $d(\fp) <\infty$ and compute $HC(I(\fp))$.
\item Compute a standard basis $G$, starting with $S\subset K[x]$,  and omit any non-vanishing term of degree $ > \deg(HC(I(\fp)))+1$ during the computation. $G$ is a standard basis of an ideal $I'(0) \supset I(0)$ in $K[x]_\x$.
\item Compute $d(0):=\dim_K K[x]_\x/I'(0)$. 
\item If $d(0)=d(\fp)$ return $G$, else\\
choose another $\fp\neq \langle 0 \rangle$ and continue from the beginning.
\item Compute a standard basis $G$ of $I$ without omitting terms and return $G$.
\end{enumerate}

\begin{Theorem} \label{thm.alg}
\begin {enumerate}
\item  The algorithm is correct. 
\item If  $A$ contains infinitely many prime ideals, there exists an open dense and infinite subset $U\subset \Spec A$ such that 
the leading ideals of $I(0)$ and $I(\fp)$ coincide for $\fp \in U$.
In particular, the algorithm terminates with step 7. for a random choice of $\fp$ in steps 3. and 7. of the algorithm.
\item The algorithm terminates with step 8. of the algorithm, if  $A$ contains finitely many prime ideals.
\end{enumerate}
\end{Theorem}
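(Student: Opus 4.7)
The plan is to prove the three claims in order, using Proposition~\ref{prop.scHC}(i) for correctness and a generic-flatness style argument for the leading ideal in the generic-prime case.

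\textbf{Correctness (1).} The only step of the algorithm that requires justification is step~7. Since $>$ is a local degree ordering, the set $M$ of monomials deleted during step~5 consists of monomials $m'$ with $\deg(m') > \deg(HC(I(\fp)))+1$, which implies $m' < HC(I(\fp))$. Repeating the argument of Corollary~\ref{cor.trunc}, the reduced standard basis algorithm applied to $S$ with these deletions coincides with the algorithm applied to $S \cup M$ without deletions, so it produces a standard basis $G$ of $I'(0) := I(0) + \langle M \rangle_{R(0)}$. The test $d(0) = d(\fp)$ in step~7 is exactly the equality of dimensions hypothesized in Proposition~\ref{prop.scHC}(i), whose conclusion $I'(0) = I(0)$ shows that $G$ is in fact a standard basis of $I(0)$. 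Step~8 performs a standard computation over $K$ with no deletions, which is correct by \cite[Algorithms 1.7.1 and 1.7.6]{GP08}.

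\textbf{Generic coincidence of leading ideals and termination at step~7 (2).} Fix a reduced standard basis $\{g_1, \ldots, g_s\}$ of $I(0)$; after clearing denominators we may assume $g_j \in A[x]$ with leading coefficient $c_j \in A \setminus \{0\}$. The finitely many identities expressing each input polynomial $f_\ell \in S$ through the $g_j$, together with the standard representations of the finitely many $s$-polynomials $spoly(g_i, g_j)$, all hold in $R(0)$; clearing denominators yields an element $a \in A \setminus \{0\}$ such that every one of these identities already holds in $A_a[x]_{\x}$. Set $U := D(a \cdot c_1 \cdots c_s) \subset \Spec A$. Then $U$ is open, contains the generic point, and is therefore dense; when $\Spec A$ is infinite, $U$ is also infinite, since its complement is a proper closed subset of the irreducible Noetherian space $\Spec A$ and any nonempty distinguished open of such a spectrum retains infinitely many primes. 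For $\fp \in U$, each $g_j$ keeps its leading monomial $LM(g_j)$ over $k(\fp)$, the representations of the $f_\ell$ show that the $g_j$ generate $I(\fp)$, and the standard representations of the $s$-polynomials persist over $k(\fp)[x]_{\x}$; Buchberger's criterion then yields that $\{g_1, \ldots, g_s\}$ is a standard basis of $I(\fp)$, so $L(I(\fp)) = L(I(0))$ as monomial ideals. Consequently $HC(I(\fp)) = HC(I(0))$ and $d(\fp) = d(0)$; in step~5 every deleted monomial lies strictly below $HC(I(0))$ and hence belongs to $I(0)$, forcing $I'(0) = I(0)$ and making the test $d(0) = d(\fp)$ at step~7 succeed.

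\textbf{Termination when $\Spec A$ is finite (3), and main obstacle.} If $\Spec A$ is finite, the outer loop visits at most finitely many primes $\fp \neq \langle 0 \rangle$; after these are exhausted, the algorithm falls through to step~8 and terminates by a direct standard basis computation of $I(0)$, correct by part~(1). The principal technical point of the whole proof is the uniform clearing-of-denominators step in part~(2): one must collect \emph{all} denominators and leading coefficients arising from a fixed choice of standard-basis certificate for $\{g_1, \ldots, g_s\}$ into a single nonzero element $a \in A$, so that a single distinguished open $D(a \cdot c_1 \cdots c_s)$ simultaneously witnesses generation, leading-term preservation, and the vanishing of every $s$-polynomial reduction after specialization. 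Once this uniformity is secured, the coincidence of leading ideals, and hence generic termination at step~7, follows from Buchberger's criterion together with the one-way inequality of Corollary~\ref{cor.sc}.
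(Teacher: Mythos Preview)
Your argument for part~(1) and part~(3) matches the paper's. For part~(2) the overall strategy is the same---produce a nonzero $a\in A$ so that on $U=D(a)$ the specialization of a fixed standard basis of $I(0)$ remains a standard basis of $I(\fp)$---but the mechanisms differ slightly: the paper invokes a \emph{pseudo standard basis} computed directly over $A$ via symmetric $s$-polynomials and division-free normal forms (\cite[Exercises~2.3.6--2.3.9]{GP08}), taking $a$ to be the product of its leading coefficients, whereas you start from a standard basis over $K$ and clear denominators in the relevant identities. Both routes are legitimate and yield the same kind of open set.

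There is, however, a small gap in your version. The identities you list---expressing each $f_\ell$ through the $g_j$, and the standard representations of the $spoly(g_i,g_j)$---only give, after reduction mod~$\fp$, that $I(\fp)\subset\langle \bar g_1,\ldots,\bar g_s\rangle$ and that the $\bar g_j$ form a standard basis of the ideal \emph{they} generate. You still need $\bar g_j\in I(\fp)$, i.e.\ the reverse containment, to conclude $L(I(\fp))=L(I(0))$ and $d(\fp)=d(0)$. This is obtained by also writing each $g_j=\sum_\ell h_{j\ell}f_\ell$ in $K[x]_\x$ and absorbing the denominators of the $h_{j\ell}$ into $a$; you should add those identities to your list. (The paper's pseudo standard basis sidesteps this because its elements lie in $I\cdot A[x]$ by construction.)

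A secondary point: your one-line justification that $U$ is infinite (``any nonempty distinguished open of such a spectrum retains infinitely many primes'') is asserted rather than argued. The paper supplies a short dimension-theoretic proof: if $\Spec A_a$ were finite then $\dim A_a\le 1$, whence $\dim A/\langle a\rangle=0$ forces $V(a)$ finite, so $\Spec A=D(a)\cup V(a)$ would be finite. You should include a comparable argument rather than leave it as a claim.
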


\begin{proof}
(1) Let $M$ be the set of monomials of degree $= \deg(HC(I(\fp)))+2$. Omitting all monomials of degree  $> \deg(HC(I(\fp)))+1$ is the same as computing a standard basis of the ideal $I'(0)\supset I(0)$ generated by $S\cup M$ (see the proof of Corollary \ref{cor.trunc}). If $d(0)=d(\fp)$ then  $I'(0)=I(0)$ by Proposition \ref{prop.scHC} (i), showing that the  algorithm is correct.

(2)  During the standard basis computation only finitely many coefficients of polynomials are involved and we may use (for theoretical purposes) the symmetric form of the s-polynomials and the normal form without division  to compute a {\em pseudo standard basis} of $I$ in the sense of \cite[Exercise 2.3.7]{GP08}. 
This has coefficients in $A$ and is a standard basis of the ideal $I(0)$ over the field $K$ and of $I(\fp)$ over $A/\fp$ if we specialize modulo a maximal ideal $\fp$ (c.f. \cite[Exercise 2.3.6 - 2.3.9]{GP08}).
Let $a \in A$ be the product of the leading coefficients of the elements of the pseudo standard basis (hence $a\neq0$) and set $U:= \Spec A \smallsetminus V(a)$, where $V(a)$ denotes the hypersurface in $\Spec A$ defined by $a$. 

For $\fp \in U\smallsetminus \langle 0\rangle$  the coefficients of the leading monomials of a pseudo standard basis  will not vanish mod $\fp$ and the leading ideals of $I(0)$ and $I(\fp)$ coincide. In particular, $d(\fp)= d(0)$ for $\fp \in U$, and hence the algorithm terminates for $\fp \in U\smallsetminus \langle 0\rangle $.

We show that U is infinite if $\Spec A$ is infinite. We have $U=\Spec B$,  $B=A[T]/\langle aT-1\rangle$ irreducible,   $\dim B= \dim A$ and $V(a) = \Spec A/ \langle a\rangle$ with  $\dim  A/ \langle a\rangle = \dim A -1.$ 
Assume that $\Spec U$ is finite, i.e., $B$ contains only finitely many prime ideals.
Then $\dim A = \dim B =1$ by Remark \ref{rem.infprimes} and $\dim  A/ \langle a\rangle =0$. Then $V(a)$ is finite and this implies $\Spec A = V(a) \cup U$ is finite, a contradiction.\\
Since $A$ is irreducible $U$ is open and dense in $\Spec A$ and a random choice of $\fp$ will hence pick $\fp \in U\smallsetminus \langle 0\rangle$.\\
(3) The algorithm terminates if $A$ has only finitely many prime ideals, since it will be decided after finitely many steps if the equalities in step 3. and 7 hold. If this is not the case, the algorithm stops with step 8.
 \end{proof}

\begin{Remark} \label{rem.infprimes}
{\em
(1) Every Noetherian ring $A$ of dimension $\geq 2$ contains infinitely many 
prime ideals. This follows from \cite[Theorem 31.2]{Ma86}.\\
(2) By Euclid, $\Z$ contains infinitely many prime ideals. The same argument shows that every polynomial ring $A=k[t_1,...,t_s]$, $s\geq 1$, contains infinitely many irreducible polynomials and hence infinitely many prime ideals (since  $A$ is a UFD every irreducible polynomial generates a prime ideal).\\
(3)  On the other hand, any local one-dimensional domain has only two  prime ideals. \\
(4) If $A$ has only finitely many prime ideals, the algorithm may terminate with step 7. if the equalities in 3. and 7. hold for some $\fp \neq 0$. 
But this may not happen and we added the step 8. only for completeness (without anything new).
}
\end{Remark}

\begin{Corollary}
Let $A$ be a principal ideal domain with infinitely many prime ideals. Then any non-zero prime ideal $\fp$ is maximal with $k(\fp) = A/\fp$ and the set $\{\fp \in \Spec A| I(0) \neq I(\fp) \}$ is finite.
Hence the algorithm terminates with step 7. for any (not necessarily random) choice of $\fp$  in steps 3. and 7. of the algorithm.
\end{Corollary}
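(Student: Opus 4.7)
The plan is to reduce the corollary to the machinery of Theorem \ref{thm.alg}(2), combined with two elementary facts about PIDs. For the first assertion, I would simply recall that in a PID every non-zero prime ideal is generated by an irreducible element and is therefore maximal; consequently $A/\fp$ is already a field, so by the definition of $k(\fp)$ in the introduction we obtain $k(\fp) = A/\fp$. This step requires no additional input from the rest of the paper.

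For the finiteness assertion, I would invoke directly the element $a \in A$, $a \neq 0$, constructed in the proof of Theorem \ref{thm.alg}(2) as the product of the leading coefficients of a pseudo standard basis of $I$. On $U = \Spec A \setminus V(a)$, for every $\fp \in U \setminus \langle 0\rangle$ the leading ideals of $I(0)$ and $I(\fp)$ coincide and $d(\fp) = d(0)$. The essential point is that in a PID the hypersurface $V(a)$ is \emph{finite}: by unique factorization $a = u p_1^{e_1}\cdots p_r^{e_r}$ with $u$ a unit and the $p_i$ pairwise non-associate irreducibles, and the prime ideals containing $a$ are precisely the finitely many $\langle p_i\rangle$. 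Hence
$$\{\fp \in \Spec A : I(0) \neq I(\fp)\} \subset V(a)$$
is finite; here I read "$I(0) \neq I(\fp)$" as the equivalent algorithmic condition that the leading ideals under $>$ differ, i.e., $d(\fp) \neq d(0)$, which is precisely the test performed in step 7.

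For the termination claim, I would observe that the algorithm re-enters the loop at steps 3 and 7 only when $d(\fp) = \infty$ or $d(0) \neq d(\fp)$. By the preceding paragraph both failure modes occur solely for $\fp$ in the finite bad set above, so any enumeration of the infinitely many primes of $A$—even a purely deterministic one, such as the natural ordering of prime numbers when $A = \Z$—encounters at most finitely many bad choices before reaching a $\fp$ at which the dimension equality holds and the algorithm exits at step 7. No genericity or randomness hypothesis is required.

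The only genuine obstacle I anticipate is interpretive rather than technical: the phrase "$I(0) \neq I(\fp)$" literally compares ideals living in different rings, and one must settle on the equivalent algorithmic condition (failure of coincidence of leading ideals, equivalently $d(\fp) \neq d(0)$) before the argument becomes clean. Once this identification is fixed, the corollary reduces to Theorem \ref{thm.alg}(2) together with the single observation that unique factorization in the PID $A$ makes $V(a)$ finite for any nonzero $a$.
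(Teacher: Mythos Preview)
Your proof is correct and follows essentially the same route as the paper's two-sentence argument: maximality of nonzero primes from the one-dimensionality of a PID, and finiteness of the exceptional set from unique factorization of the element $a$ constructed in the proof of Theorem~\ref{thm.alg}(2). The only minor slip is the ``i.e.'' equating ``leading ideals differ'' with ``$d(\fp)\neq d(0)$''---these are not equivalent in general---but the containment you actually use, namely $\{\fp: d(\fp)\neq d(0)\text{ or }d(\fp)=\infty\}\subset V(a)$, holds regardless, so the termination argument goes through.
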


\begin{proof} Any non-zero prime $\fp$ is a maximal ideal  since the Krull dimension of $A$ is 1. Since $A$ is factorial, the element $a\in A$  in the proof of Theorem \ref{thm.alg} is a product of finitely many irreducible factors defining the prime ideals  $\fp$ such that $I(0) \neq I(\fp)$.
\end{proof}

For example, the corollary applies to  $A=\Z$ and to $A=k[t]$, $t$ one variable, $k$ any field. For $k$ infinite there are even infinitely many maximal ideals of the form $\langle t-p \rangle$, $p \in k$. 

In practise $d(0)=d(\fp)$ will be usually the case for the first choice of a random $\fp$.

\begin{Remark}{\em 
For computations of standard bases over $\Q$ modular algorithms are often much faster than a direct computation. One
computes several standard bases for different prime numbers, lifts them to $\Q$ and gets a potential standard basis over $\Q$ (cf. \cite{Ar03} for global and \cite{Pf07} for local orderings).
To prove correctness a standard basis computation of the lifted basis has to be performed over $\Q$, which is usually the most time consuming part. We mention that for 0-dimensional ideals and local degree orderings this last computation can be speeded up by using the highest corner from the modular computations. 

However, the examples below show that Algorithm \ref{alg} is often faster than the modular algorithm (without using the highest corner).
}
\end{Remark}

\section {Examples}
The following examples demonstrate the effect of Algorithm \ref{alg}. We show the  {\sc Singular} code for the examples, list then the examples of ideals to be computed, and finally give a table of timings.  
\begin{verbatim}
ring R  = 0,(x,y,z),ds;	       
poly F  = x3y3+x5y2+2x2y5+x2y2z3+xy7+z9+y13+x25;
ideal I = jacob(F),F;           
ring r  = 320039,(x,y,z),ds;
ideal I = imap(R,I);       //maps I from R to r
ideal J = std(I);
poly HC = highcorner(J);   //x24z7
setring R;
noether = z*imap(r,HC);
ideal J = std(I);
noether = 0;
ideal J = std(I);
\end{verbatim}
Comment on the {\sc Singular} code:\\
{\tt ring R  = 0,(x,y,z),ds;}: the ring $\Q[x,y,z]_{\langle x,y,z\rangle}$  with  local 
degree reverse lexicographical ordering $ds$, \\ 
 {\tt ideal I = jacob(F),F;}: the ideal generated by  {\tt F} and the partials of {\tt F}, \\
{\tt  ideal J = std(I);}:  a standard basis in $\F_p[x,y,z]_{\langle x,y,z\rangle}$, $p=320039$,\\
 {\tt poly HC = highcorner(J);}: the highest corner of $J$, $x^{24}z^7$in this case,\\
 {\tt noether = z*imap(r,HC);}: the {\sc Singular} command for truncating terms bigger (w.r.t.  {\tt ds}) than $z*HC$.\\
 {\tt  ideal J = std(I);}: a standard basis in $\Q[x,y,z]_{\langle x,y,z\rangle}$ using $HC$ for truncation,\\
{\tt noether = 0;}: disables truncation during standard basis computation,\\
{\tt ideal J = std(I);}: a standard basis in $\Q[x,y,z]_{\langle x,y,z\rangle}$ not using $HC$.\\

The first 4 examples are computed over $\Q$ and $\mathbb F_p$. The {\sc Singular} code is the same for the first 3 examples, with monomial ordering $ds$ and  computations in $\F_{320039}[y,x,z]_{\langle x,y,z\rangle}$ resp. in $\Q[y,x,z]_{\langle x,y,z\rangle}$.

 \begin{enumerate}
 \item $F  = x^3y^3+x^5y^2+2x^2y^5+x^2y^2z^3+xy^7+z^9+y^{13}+x^{25},$\\
$I = \langle F, \frac{\partial F}{\partial x},\frac{\partial F}{\partial y},\frac{\partial F}{\partial z}\rangle$.   

\item $F = xyz(x+y+z)^2 +(x+y+z)^3 +x^{15}+y^{15}+z^{15}$,\\
$ I =  \langle \frac{\partial F}{\partial x},\frac{\partial F}{\partial y},\frac{\partial F}{\partial z}\rangle$.

\item $F= x^8y^6+x^{10}y^5+x^8y^7+2x^7y^8+x^7y^6z^2+x^{16}+x^6y^{10}+y^{18}+z^{20},$\\
$ I =  \langle \frac{\partial F}{\partial x},\frac{\partial F}{\partial y},\frac{\partial F}{\partial z}\rangle$.

\item 
 $I =$ ideal in the ring $\Z[x,y,z,w]_{\langle x,y,z,w\rangle}$ having 5 generators, which are random linear combinations of polynomials of degree 5, 7 and 10, with integer coefficients in the interval [-99,99]. The
{\sc Singular} code for creating this:

\begin{verbatim}
LIB "random.lib";
ring R = 0,(x,y,z,w),ds;
system("random",100);    //sets the start value of random
ideal I=randomid(maxideal(5)+maxideal(7)+maxideal(10),5,99);
\end{verbatim}
 
 \text{    } In the following 4 examples the ring has a parameter $t$ and we want to compute the ideals in $\Q(t)[x,y,z]_{\langle x,y,z\rangle}$ resp. in $\Q(t)[x,y,z,w]_{\langle x,y,z,w\rangle}$. We set the parameter to $t=1$ to compute the  highest corner in $\mathbb F_{32003} [y,x,z]_{\langle x,y,z\rangle}$ resp. in $\mathbb F_{32003}[x,y,z,w]_{\langle x,y,z,w\rangle}$.
 
\begin{verbatim} 
ring R = (0,t),(x,y,z),ds;
poly F = y10+(t2)*x7y7+x15+x9y6+(2t)*x6y9+x6y6z3+x5y11+z21;
poly F1 = subst(F,t,1);      //setting t=1
ring r = 32003,(x,y,z),ds;
poly F = imap(R,F1);
ideal I = jacob(F);
ideal J = std(I);
poly HC = highcorner(J);     //x7y2z37
setring R;
noether = z*imap(r,HC); 
ideal I = jacob(F);
ideal J = std(I);
noether = 0;
ideal J = std(I);
\end{verbatim} 

\item
$F = y^{10}+t^2x^7y^7+x^{15}+x^9y^6+2tx^6y^9+x^6y^6z^3+x^5y^{11}+z^{21},$\\
$ I =  \langle \frac{\partial F}{\partial x},\frac{\partial F}{\partial y},\frac{\partial F}{\partial z}\rangle$.
\item
$F = xyz(x+y+z)^2 +(x+y+z)^3 + t(x^{15}+y^{15}+z^{15})$,\\
$ I =  \langle \frac{\partial F}{\partial x},\frac{\partial F}{\partial y},\frac{\partial F}{\partial z}\rangle$.
\item
$F = x^8y^6+x^{10}y^5+x^8y^7+2x^7y^8+x^7y^6z^2+x^{16}+x^6y^{10}+ty^{18}+t^2z^{20}$,\\
$ I =  \langle \frac{\partial F}{\partial x},\frac{\partial F}{\partial y},\frac{\partial F}{\partial z}\rangle$.

\item  
$I =$ ideal in the ring $\Z(t)[x,y,z,w]_{\langle x,y,z,w\rangle}$ having 5 generators, which are random linear combinations of polynomials of degree 5, 7 and 9, with integer coefficients in the interval [-99,99]. 
The {\sc Singular} code for creating this:

\begin{verbatim}  
LIB "random.lib";
ring R = (0,t),(x,y,z,w),ds;
system("random",100); //sets the start value of random 
ideal I=randomid(maxideal(5)+maxideal(7)+t*maxideal(9),5,99);}
\end{verbatim}
\end{enumerate}

\noindent{\large {\bf Timings} }\\
The examples were computed on a Linux machine with i7-6700 CPU @ 3.40GHz. The times in the columns "Alg. \ref{alg}" (referring to our algorithm) and "std without HC" (referring to the usual standard basis algorithm), and "modStd" are in seconds. RAM is the maximum memory requirement (for the usual algorithm) in MB. \
{\tt modStd} is the modular standard basis algorithm in {\sc Singular}, it is not implemented for rings with parameters.
\begin{center}
\[
\begin{array}{| c | r | r | r | r |} \hline
Ex. & \text{Alg. \ref{alg}} &  \text{{\tt std} without HC} & \text{  RAM }&{\tt modStd }\\ \hline
1 & 0.03 & 2115.29 & 7571&2.60\\
2 & 0.16 &210.69  & 1213 &5.95\\ 
3 & 3.36 &> 2h & > 64 GB &> 2h\\
4 & 124.94 &2457.66 & 7938&141.74\\ \hline
5 & 0.01 &> 2h & > 120GB&--\\ 
6 & 0.16 &> 2h & > 80GB	&--\\ 
7 & 13.59 &> 2h& > 80GB&--\\ 
8 & 50.32 &1881.670 & 10003 &--\\ \hline
\end{array}
\]
\end{center}


\addcontentsline{toc}{section}{References}

\end{document}